\documentclass[12pt]{amsart}

\usepackage{geometry}
\usepackage{amsmath, amssymb, amsthm, amsfonts, amsxtra, mathrsfs, mathtools} 

\usepackage[intoc]{nomencl}
\makenomenclature

\usepackage{extarrows}
\usepackage{import}
\usepackage{color}
\usepackage[colorlinks=true, allcolors=purple]{hyperref}
\usepackage{stmaryrd}
\usepackage{tikz-cd}
\usepackage{tikz}
\usepackage{tikzsymbols}
\usepackage[shortlabels]{enumitem}
\usetikzlibrary{matrix}
\usepackage[ruled,vlined]{algorithm2e}

\usepackage[mathscr]{euscript}

\input xy
\xyoption{all}

\numberwithin{equation}{section}

\theoremstyle{definition}
\newtheorem{thm}{Theorem}[section]
\newtheorem{lemma}[thm]{Lemma}

\newtheorem{corollary}[thm]{Corollary}

\newtheorem{definition}[thm]{Definition}
\newtheorem{solution}{Solution}

\newtheorem{remark}[thm]{Remark}
\newtheorem*{digression*}{Digression}

\newtheorem*{lemma*}{Lemma}

\newcommand\Aut{\operatorname{Aut}}

\newcommand{\GL}{\textup{GL}}

\newcommand\Hom{\textup{Hom}}

\newcommand{\trpose[1]}{\leftexp{t}{#1}}

\usepackage{CJKutf8} 

\def\multiset#1#2{\ensuremath{\left(\kern-.3em\left(\genfrac{}{}{0pt}{}{#1}{#2}\right)\kern-.3em\right)}}

\newlength\mylen
\newlist{mycases}{enumerate}{1}
\setlist[mycases,1]{label=\textbf{Case~\arabic*.}, 
  labelwidth=\dimexpr-\mylen-\labelsep\relax,leftmargin=0pt,align=right}

\newcommand{\repeatable}[2]{%
  \label{#1}\global\@namedef{repeatable@#1}{#2}#2
}
\renewcommand{\repeat}[1]{%
  \@ifundefined{repeatable@#1}{NOT FOUND}{$\@nameuse{repeatable@#1}$}%
  ~\ref{#1}}
\makeatother

\title[Twisted Alexander Polynomials and Ribbon Homology Cobordisms]{Twisted
Alexander Polynomials and Fibered Classes in Ribbon Homology Cobordisms}
\author{Brian Sun}\address{Brian Sun}\email{bsun@berkeley.edu}

\begin{document}
\begin{abstract}
    Let $Y_-$ and $Y_+$ be two compact 3-manifolds with empty or toroidal
    boundary. A 4-dimensional ribbon homology cobordism is a homologically
    trivial cobordism built with 1-handles and 2-handles. In this note,
    following the work of Friedl and collaborators, we apply twisted Alexander
    polynomials to show that the fibered classes of $Y_+$ map to those of $Y_-$.
\end{abstract}

\maketitle

\allowdisplaybreaks

\section{Introduction}

We begin by introducing 4-dimensional ribbon homology cobordisms, first studied
in great generality in \cite{DAEMI2022108580}. Let $Y_-$ and $Y_+$ be two
3-manifolds. Unless otherwise stated, all 3-manifolds will be connected,
compact, and orientable, with either empty or toroidal boundary.

A \textit{ribbon cobordism} is a cobordism $W$ from $Y_-$ to $Y_+$
that can be built using only 1-handles and 2-handles. If $Y_-$ has boundary, we
only consider handle attachments to the interior, so the boundary is maintained
all the way. An \textit{$R$-homology cobordism} is a cobordism $W$ where
$H_*(W,Y_-;R)=H_*(W,Y_+;R)=0$. By a \textit{ribbon homology cobordism} we will
mean a ribbon $\mathbb{Q}$-homology cobordism unless specified.

One natural example of a ribbon homology cobordism (in fact, a
$\mathbb{Z}$-homology cobordism) is that of a ribbon concordance, as introduced
in \cite{GordonRibbonConcordance}. Let $L_0,L_1\subset S^3$ be two oriented
$m$-component links. A concordance between $L_0$ and $L_1$ is an embedding
$e\colon \bigsqcup^m S^1\times[0,1]\rightarrow S^3\times[0,1]$ such that
$e(S^1\times \{0\})= L_0\times\{0\}$ and $e(S^1\times\{1\})= L_1\times\{1\}$. We
denote $C\coloneqq \operatorname{im}(e)$ and identify $L_0$ and $L_1$ with their
images in $S^3\times[0,1]$. By a small perturbation we may assume that the
projection $p\colon S^3\times [0,1]\rightarrow [0,1]$ is a Morse function. We
say $C$ is a \textit{ribbon concordance} from $L_1$ to $L_0$ if $p|_C$ has no
index 2 critical points (no local maxima), and in this case we can write
$L_1\geq L_0$. The exterior $(S^3\times I)\setminus \mathcal{N}(C)$ defines a
ribbon (homology) cobordism between link exteriors $X_0$ and $X_1$, which can be
seen by imagining filling up $S^3\times I$ with water and analyzing how the
topology changes as the surface passes critical points. 

An important property of homology cobordisms for our purposes is that it gives
natural identifications $H_*(Y_-;\mathbb{Q})\cong H_*(W;\mathbb{Q})\cong
H_*(Y_+;\mathbb{Q})$. We thus get natural identifications $H^1(Y_-;\mathbb{Q})
\cong H^1(W;\mathbb{Q})\cong H^1(Y_+;\mathbb{Q})$ via the universal coefficient
theorem. This lets us compare classes in $H^1$, and one natural question to ask
is how the fibered classes (or perhaps more precisely, rays) of the Thurston
norm ball correspond under the identification of $H^1(Y_-;\mathbb{Q})$ and
$H^1(Y_+;\mathbb{Q})$. We have the following result:

\begin{thm}
\label{thm:fibered}
    Let $Y_-$ and $Y_+$ be 3-manifolds with a ribbon homology cobordism $W$ from
    $Y_-$ to $Y_+$. If $\phi_+\in H^1(Y_+;\mathbb{Q})$ is a fibered class, then
    the associated class in $\phi_-\in H^1(Y_-;\mathbb{Q})$ is also fibered.
\end{thm}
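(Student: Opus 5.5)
Following Friedl and collaborators, the plan is to use the criterion of Friedl--Vidussi: a class $\phi\in H^1(Y;\mathbb{Q})$ (after scaling to be primitive integral) is fibered if and only if, for every epimorphism $\alpha\colon\pi_1(Y)\to G$ onto a finite group, the twisted Alexander polynomial $\Delta^{\alpha}_{Y,\phi}\in\mathbb{Q}[t^{\pm1}]$ is monic and has degree $\|\phi\|_T + (1+b_3(Y))\,\operatorname{div}(\phi_\alpha)$, the value forced by Thurston norm considerations. So we must show that this monicity and degree condition transfers from $Y_+$ to $Y_-$ along $W$.

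\textbf{Step 1 (group theory).} Since $W$ is built from $Y_-$ using only $1$- and $2$-handles, turning it upside down gives $Y_+$ with only $2$- and $3$-handles attached. Hence $\pi_1(Y_+)\to\pi_1(W)$ is surjective. Dually, $\pi_1(Y_-)\to\pi_1(W)$ is injective; this is the Gordon-type result, which holds because $\pi_1(W)$ is obtained from $\pi_1(Y_-)$ by adding generators and an equal number of relators, and 3-manifold groups are residually finite. Now take an epimorphism $\alpha_-\colon\pi_1(Y_-)\to G$. We need to extend it over $\pi_1(W)$, possibly after passing to a larger finite quotient. Following Friedl's argument for ribbon concordances, I would use residual finiteness together with injectivity to find a finite quotient $\beta\colon\pi_1(W)\to H$ whose restriction to $\pi_1(Y_-)$ factors through a map onto $G$. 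It suffices to check the fibering criterion for such refined representations, since the criterion is stable under passing to covers/quotients through which $\alpha$ factors. The composite $\alpha_+=\beta\circ\iota_+$ is then an epimorphism from $\pi_1(Y_+)$.

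\textbf{Step 2 (divisibility of twisted polynomials).} The key technical step, and the main obstacle, is to prove $\Delta^{\beta}_{Y_-,\phi_-}\cdot \overline{\Delta^{\beta}_{Y_-,\phi_-}}\;\big|\;\Delta^{\beta}_{Y_+,\phi_+}$ up to units, or at least $\Delta_{Y_-}\mid\Delta_{Y_+}$ with a compatible complementary factor. This is the twisted analogue of Gilmer/Friedl's result that Alexander polynomials divide under ribbon concordance. I would prove it via the twisted chain complexes of the pairs: the $\mathbb{Q}$-homology cobordism condition gives $H_*(W,Y_\pm;\mathbb{Q}(t)\otimes\mathbb{Q}[H])=0$, by a torsion argument over the field. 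The handle structure gives presentations in which the order of $H_1(W,Y_-)$ and $H_1(W;\,\cdot\,)$ relate via exact sequences. Using the injectivity/surjectivity from Step 1, one gets $\operatorname{ord}H_1(Y_-)\mid\operatorname{ord}H_1(W)$ and $\operatorname{ord}H_1(W)\mid\operatorname{ord}H_1(Y_+)$; Reidemeister torsion multiplicativity and duality then give the symmetric factor. I expect controlling $H_0$ and $H_2$ terms, together with the lack of $\mathbb{Z}$-coefficient control, to be the delicate part.

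\textbf{Step 3 (conclusion).} Since $\phi_+$ is fibered, $\Delta^{\alpha_+}_{Y_+,\phi_+}$ is monic, and any divisor of a monic polynomial over $\mathbb{Q}[t^{\pm1}]$ that is normalized in $\mathbb{Z}$ is monic by Gauss's lemma. So $\Delta^{\beta}_{Y_-,\phi_-}$ is monic. For degrees, the Thurston norms agree: $\|\phi_-\|_T\le\|\phi_+\|_T$ via the surface obtained by pushing a fiber through $W$. The lower bound from twisted polynomials, combined with the divisibility factor, then forces $\deg\Delta^{\beta}_{Y_-,\phi_-}$ to attain the required value. Friedl--Vidussi therefore gives that $\phi_-$ is fibered.
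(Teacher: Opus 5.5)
\textbf{There is a genuine gap in Step 3, in the degree half of the Friedl--Vidussi criterion.} Everything you have bounds $\deg\Delta_{Y_-,\phi_-}$ from \emph{above}:
\begin{itemize}
\item the Friedl--Kim inequality gives $\deg\Delta_{Y_-,\phi_-}\le |G|\,x(\phi_-)+c_-$, where $c_-$ is the correction term;
\item divisibility gives $\deg\Delta_{Y_-,\phi_-}\le\deg\Delta_{Y_+,\phi_+}=|G|\,x(\phi_+)+c_+$.
\end{itemize}
Adding $x(\phi_-)\le x(\phi_+)$ just produces a chain of upper bounds, never a squeeze. To force $\deg\Delta_{Y_-,\phi_-}=|G|\,x(\phi_-)+c_-$ you would need $\deg\Delta_{Y_-}=\deg\Delta_{Y_+}$, i.e.\ a unit complementary factor. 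That fails in general: for the ribbon concordance from $K\#J\#(-J)$ to $K$, already with trivial $\alpha$, the factor is $\Delta_J\overline{\Delta_J}$.

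Separately, $x(\phi_-)\le x(\phi_+)$ does not follow from ``pushing a fiber through $W$''. $W$ is not a product, so there is no such push. This monotonicity is a Floer-theoretic theorem of Daemi--Lidman--Vela-Vick--Wong, and the paper's closing remark explains why degree information is out of reach by these methods. Your Step 2 target $\Delta_{Y_-}\overline{\Delta_{Y_-}}\mid\Delta_{Y_+}$ is also false: take $W=Y\times I$. Only $\Delta_{Y_-}\mid\Delta_{Y_+}$ (Corollary~\ref{cor:divisibility}) is available.

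\textbf{The missing idea is to drop the degree condition entirely.} Replace the monic-plus-degree criterion by the Friedl--Vidussi \emph{vanishing} theorem (Theorem~\ref{thm:nonfibered-vanishing}) and argue by contrapositive, as the paper does.
\begin{itemize}
\item If $\phi_-$ is not fibered, some finite quotient $\beta$ of $\pi_1(Y_-)$ has $\Delta^{\beta}_{Y_-,\phi_-,1}=0$.
\item Extend $\beta$ over $\pi_1(W)$.
\item Divisibility turns $0\mid\Delta_{Y_+,\phi_+,1}$ into $\Delta_{Y_+,\phi_+,1}=0$.
\item This contradicts Friedl--Kim monicity for the fibered class $\phi_+$. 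The paper treats $Y_+\in\{S^1\times D^2,S^1\times S^2\}$ separately, since Theorem~\ref{thm:obstruct-fiber} excludes them.
\end{itemize}
So only \emph{nonvanishing} has to be transported, and divisibility does that for free. No Thurston norm or degree comparison is needed.

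\textbf{This also repairs Step 1.} Injectivity plus residual finiteness does \emph{not} let you extend a finite quotient of $\pi_1(Y_-)$ to a finite quotient of $\pi_1(W)$. For that you would need the profinite topology of $\pi_1(W)$ to induce the full profinite topology on $\pi_1(Y_-)$. The paper instead composes $\beta$ with the regular representation into $U(|G|)$ and extends it over $\pi_1(W)$ by Gerstenhaber--Rothaus (Lemma~\ref{lem:extend}). Friedl--Kim applies over any Noetherian UFD such as $\mathbb{C}$, and vanishing over $\mathbb{Z}$ persists after tensoring with $\mathbb{C}$. So a finite quotient of $\pi_1(W)$ is never needed; if you wanted one anyway, Malcev's theorem applied to the finitely generated linear image would supply it.
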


Hence each fibered open cone of $Y_+$ is sent to a fibered open cone of $Y_-$.
Specializing to ribbon concordances, we have the following result, first shown
in the case of knots \cite{SILVER199299,Kochloukova}.

\begin{corollary}
    Let $L_0$ and $L_1$ be links with $L_1\geq L_0$ with $L_1$ nontrivial. If
    $L_1$ is fibered, then so is $L_0$.
\end{corollary}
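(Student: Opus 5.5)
The statement to prove is the Corollary, and I would deduce it from Theorem~\ref{thm:fibered}. Suppose $L_1 \geq L_0$ via a ribbon concordance $C$, and let $X_0$, $X_1$ be the link exteriors. As noted in the introduction, $W = (S^3\times I)\setminus \mathcal{N}(C)$ is a ribbon $\mathbb{Z}$-homology cobordism. Turned upside down (no local maxima of $p|_C$), it is built from $X_0$ using only 1- and 2-handles, so it runs from $Y_- = X_0$ to $Y_+ = X_1$. Both exteriors are compact and orientable with toroidal boundary, so the hypotheses of Theorem~\ref{thm:fibered} are met.

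\emph{Fibered links as classes.} A fibered link $L$ (with a given orientation) means that the exterior $X_L$ fibers over $S^1$ with fiber a Seifert surface. Equivalently, the class $\phi_L \in H^1(X_L;\mathbb{Q})$ sending every oriented meridian to $1$ is a fibered class. So the input is that $\phi_{L_1}$ is fibered. Let $\phi_-\in H^1(X_0;\mathbb{Q})$ be the class associated to $\phi_{L_1}$ under $H^1(X_1;\mathbb{Q})\cong H^1(W;\mathbb{Q})\cong H^1(X_0;\mathbb{Q})$. By Theorem~\ref{thm:fibered}, $\phi_-$ is fibered.

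\emph{Identifying $\phi_-$.} It remains to show $\phi_- = \phi_{L_0}$, which I expect to be the only real point to check. The meridian of the $i$-th component of $L_1$ and the meridian of the $i$-th component of $L_0$ are both meridians of the same annulus component $C_i$. A meridian circle of $C_i$ can be slid along $C_i$ inside $W$, which is the normal circle bundle of $C$ restricted away. So the two meridians are homologous in $W$, with orientations matching the orientation of $C$ induced from $L_0$ and $L_1$. Since $H_1(W)$ is freely generated by these meridians (Alexander duality), the identification carries $\phi_{L_1}$ to a class taking value $1$ on each meridian of $L_0$, namely $\phi_{L_0}$. Hence $L_0$ is fibered.

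\emph{Role of nontriviality.} The hypothesis that $L_1$ is nontrivial ensures that the theorem applies in a nondegenerate setting, where the exteriors are genuine 3-manifolds with fibered class $\phi\neq0$. It also rules out degenerate fibrations such as the unknot with disk fibers. Those cases are fine anyway, but I would remark on them rather than dwell.
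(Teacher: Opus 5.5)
Your proposal is correct and follows the paper's proof: apply Theorem~\ref{thm:fibered} to the ribbon concordance exterior $W$ with $Y_-=X_0$ and $Y_+=X_1$. Then note that corresponding meridians of $L_0$ and $L_1$ are identified in $W$, so the class taking the value $1$ on every meridian of $L_1$ corresponds to the analogous class for $L_0$. Your version only adds detail to that meridian-pairing step (sliding a meridian along the annulus $C_i$ and using Alexander duality), which the paper states in one line.
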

\begin{proof}
    Each meridian of $L_0$ gets paired with a meridian of $L_1$, and so the
    element $(1,1,\ldots,1)\in H^1(X_0;\mathbb{Q})$ is identified with the
    analogous element in $H^1(X_1;\mathbb{Q})$.
\end{proof}

We will prove the theorem using the theory of twisted Alexander polynomials
that has been developed in \cite{FRIEDL2006929,
FriedlHomotopyAlexander,FriedlVanishing,
friedl2012thurstonnormtwistedalexander}. The hope is that some of these ideas
can complement Floer-theoretic techniques in working with ribbon homology
cobordisms.

\section{Background}

\subsection{Twisted Alexander polynomials} Two good references for this section
are \cite{friedl2010surveytwistedalexanderpolynomials} and \cite{MR1809561}. Let $X$ be
a space with universal cover $\tilde X$. The group $\pi_1(X)$ acts on the
singular chain complex $C_*(\tilde X)$ from the left as deck transformations. We
convert this into a right action in the natural way. Then if $A$ is a left
$\mathbb{Z}[\pi_1(X)]$-module given by $\beta\colon \pi_1(X)\rightarrow
\Aut(A)$, we define the \textit{$i$th twisted homology group} as 
\[ 
    H_i(X;A)\coloneqq H_i(C_*(\tilde X;\mathbb{Z})\otimes_{\mathbb{Z}[\pi_1(X)]}
    A).
\]
We consider the following special case. Let $M$ be a compact 3-manifold and
$\alpha\colon \pi_1(M)\rightarrow \GL(k,\mathbb{C})$ a representation. Further
let $\phi\colon \pi_1(M)\rightarrow \mathbb{Z}$ be a nontrivial homomorphism,
i.e., a nonzero element of $H^1(M;\mathbb{Z})$. Then we can form the tensor
representation
\[ 
    \alpha\otimes\phi\colon \pi_1(M)\rightarrow \GL(k,\mathbb{C}[t^{\pm 1}])
\]
where we map $g$ to the product $\alpha(g)\cdot \phi(g)$, interpreting $\phi(g)$
as a scaling. Then we will write 
\[ 
    H_i^{\alpha\otimes \phi}(M;\mathbb{C}[t^{\pm 1}]^k) \coloneqq 
    H_i(C_*(\tilde M;\mathbb{Z})\otimes_{\mathbb{Z}[\pi_1(M)]}\mathbb{C}[t^{\pm
    1}]^k).
\]
The \textit{$i$th twisted Alexander polynomial} of $(M,\phi,\alpha)$ is the
order of $H_i^{\alpha\otimes\phi}(M;\mathbb{C}[t^{\pm 1}]^k)$, denoted by
$\Delta_{M,\phi,i}^\alpha\in \mathbb{C}[t^{\pm 1}]$. It is well-defined up to
a multiplication by a unit in $\mathbb{C}[t^{\pm 1}]$.

We now cite two results showing how twisted Alexander polynomials can obstruct
fibering and detect non-fibering:

\begin{thm}[{\cite[Theorem~1.3]{FRIEDL2006929}}]
\label{thm:obstruct-fiber}
    Let $M$ be a 3-manifold that is not $S^1\times D^2$ or $S^1\times S^2$, and
    let $\phi\in H^1(M;\mathbb{Z})$ be a nontrivial class. If $\phi$ is a
    fibered class, then for any representation $\alpha\colon \pi_1(M)\rightarrow
    \GL(k,R)$ with $R$ a Noetherian UFD, $\Delta_{M,\phi,1}^\alpha$ is monic.
\end{thm}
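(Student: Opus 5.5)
The plan is to use the fibration to identify the twisted Alexander module with an $R$-module of finite type on which $t$ acts invertibly, and then show that such a module has monic order. Suppose $\phi$ is fibered, so $M$ is the mapping torus $M_f=S\times[0,1]/(x,1)\sim(f(x),0)$ of a homeomorphism $f\colon S\to S$ of a compact surface $S$. Then $\pi_1(M)\cong\pi_1(S)\rtimes_{f_*}\mathbb{Z}$, with $\phi$ being projection onto the $\mathbb{Z}$ factor. The infinite cyclic cover $\tilde M_\phi$ corresponding to $\ker\phi$ is $S\times\mathbb{R}$, and the generator $t$ of the deck group acts on it through $f$.

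\textbf{Step 1: twisted homology in terms of the fiber.} By Shapiro's lemma,
\[
H_1^{\alpha\otimes\phi}(M;R[t^{\pm1}]^k)\cong H_1(\tilde M_\phi;R^k_\alpha)\cong H_1(S;R^k_{\alpha|})
\]
as $R$-modules, where $\alpha|$ is the restriction of $\alpha$ to $\pi_1(S)$. Under this identification, $t$ acts by the isomorphism $\alpha(\mu)\cdot f_*$, where $\mu$ is a lift of the generator of $\mathbb{Z}$.

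To see this concretely, choose a finite CW structure on $S$ and a cellular map homotopic to $f$. The mapping torus structure then exhibits $C_*(M;R[t^{\pm1}]^k)$ as the mapping cone of
\[
tF-\mathrm{id}\colon C_*(S;R^k)\otimes_R R[t^{\pm1}]\to C_*(S;R^k)\otimes_R R[t^{\pm1}],
\]
where $F$ is the induced chain map. The resulting Wang sequence collapses because the cover is a product. In any case, $H:=H_1^{\alpha\otimes\phi}$ is a finitely generated $R$-module, since $C_*(S;R^k)$ consists of finitely generated free $R$-modules and $R$ is Noetherian.

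\textbf{Step 2: $R$-finiteness gives monic order.} Let $H$ be a finitely generated $R[t^{\pm1}]$-module that is finitely generated over $R$. By Cayley--Hamilton (the determinant trick), $t$ satisfies a monic polynomial $p(t)$ with coefficients in $R$. Applied to $t^{-1}$, the same argument gives a monic relation for $t^{-1}$, so $p$ can be chosen with unit constant term. In particular $H$ is $R[t^{\pm1}]$-torsion, and its order divides a power of such a $p$ up to units.

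Since $R[t^{\pm1}]$ is a UFD, every divisor of a polynomial whose top and bottom coefficients are units of $R$ again has that property, by Gauss's lemma applied to leading and trailing coefficients. Hence $\Delta^\alpha_{M,\phi,1}$ is monic, in the sense of leading coefficient a unit of $R$. Alternatively, one can take a square presentation matrix of the form $tJ-I$ with $J$ invertible over $R$ and compute the determinant directly.

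\textbf{Expected obstacle.} The delicate point is justifying that the order of $H$, defined via Fitting ideals or elementary ideals over the Noetherian UFD $R[t^{\pm1}]$, really divides the monic relation polynomial. Here I would first try to reduce to the case of localizations of $R$ at height-one primes. There one can argue with a presentation matrix of $H$ as an $R$-module together with the endomorphism $t$, and compute the order as $\det(t\cdot\mathrm{id}-T)$ over a free resolution. The hypotheses excluding $S^1\times D^2$ and $S^1\times S^2$ rule out $S$ being a disk or sphere. In those cases $H_1(S)=0$ but the conventions for $\Delta_0$ and $\Delta_1$ interact badly, so I would check those degenerate fibers separately rather than rely on the general argument.
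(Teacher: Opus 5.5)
Your argument is correct and is essentially the standard proof behind the cited Friedl--Kim theorem (the paper itself gives no proof): since the infinite cyclic cover is $S\times\mathbb{R}$, the Alexander module is finitely generated over $R$, and any such module has monic order. The step you flag as delicate is routine---if $H$ is generated by $n$ elements then $\mathrm{Ann}(H)^n\subseteq E_0(H)$, so $p(t)^n\in E_0(H)$ and $\operatorname{ord}(H)$ divides $p(t)^n$, hence is monic because leading coefficients multiply in the domain $R$---while the exclusion of $S^1\times D^2$ and $S^1\times S^2$ plays no role for monicness (there $H=0$; it is needed only for the Thurston norm equality), and your alternative presentation $tJ-I$ exists only when $H_1(S;R^k)$ is $R$-free, so rely on the Cayley--Hamilton route.
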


\begin{remark}
    In fact, it is also proven in \cite{FRIEDL2006929} that in the fibered case
    the twisted Alexander polynomial detects the Thurston norm.
\end{remark}

The exclusion of $S^1\times D^2$ and $S^1\times S^2$ is due to the fact that the
proof requires $x_M(\phi)=\chi_-(S)$ with $S$ dual to $\phi$. We remark that a
converse of the above theorem has been shown \cite{FriedlVidussi2011}.

\begin{thm}[{\cite[Theorem~1.1]{FriedlVanishing}}]
\label{thm:nonfibered-vanishing}
    Let $M$ be a 3-manifold and $\phi\in H^1(M;\mathbb{Z})$ a nontrivial class.
    If $\phi$ is nonfibered, then there exists an epimorphism $\alpha\colon
    \pi_1(M)\rightarrow G$ onto a finite group $G$ such that
    \[ 
        \Delta_{M,\phi,1}^\alpha=0.
    \]
\end{thm}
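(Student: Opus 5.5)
This is the vanishing theorem of Friedl and Vidussi, and I would prove it through the standard route: sutured manifold decomposition combined with subgroup separability. We may assume $\phi$ is primitive. Take a Thurston norm minimizing surface $S$ dual to $\phi$ and cut $M$ along it. This gives $M_S = M\setminus \nu S$ with two copies $S_\pm\subset\partial M_S$ and inclusions $\iota_\pm\colon S\to M_S$. I would also arrange that $S$ is connected or at least has no superfluous components.

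By Stallings' fibration theorem, together with the standard sutured-manifold characterization, $\phi$ is fibered exactly when $(M_S,S_\pm)$ is a product $S\times[0,1]$. Equivalently, $\iota_{\pm*}\colon\pi_1(S_\pm)\to\pi_1(M_S)$ are isomorphisms. Since $\phi$ is nonfibered, one of these maps, say $\iota_{-*}$, is not surjective on some component. (Injectivity holds because $S$ is incompressible.)

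The key group-theoretic input is that $\iota_{-*}(\pi_1 S)$ is a \emph{separable} subgroup of $\pi_1(M)$. This follows from the work of Wise and Przytycki--Wise (and Agol), after passing through the graph-of-groups structure $\pi_1(M)=\pi_1(M_S)*_{\pi_1 S}$. Separability yields an epimorphism $\alpha\colon\pi_1(M)\to G$ onto a finite group such that $\alpha(\iota_{-*}\pi_1 S)\subsetneq\alpha(\pi_1 M_S)$, component by component.

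Next I compute twisted homology with coefficients $\mathbb{C}[G]\otimes\mathbb{C}[t^{\pm1}]$ via the Mayer--Vietoris sequence for the decomposition of the infinite cyclic cover along lifts of $S$:
\[
H_1(M_S;\mathbb{C}[G])[t^{\pm1}]\to H_1^{\alpha\otimes\phi}(M)\to H_0(S;\mathbb{C}[G])[t^{\pm1}]\xrightarrow{\ \iota_+ - t\iota_-\ } H_0(M_S;\mathbb{C}[G])[t^{\pm1}].
\]
Here $H_0(S;\mathbb{C}[G])\cong\mathbb{C}[G/\alpha(\pi_1 S)]$ and $H_0(M_S;\mathbb{C}[G])\cong \mathbb{C}[G/\alpha(\pi_1 M_S)]$. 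By the choice of $\alpha$ the first has strictly larger dimension than the second. Therefore $\iota_+-t\iota_-$ has nontrivial kernel over $\mathbb{C}(t)$, and $H_1^{\alpha\otimes\phi}(M;\mathbb{C}[G][t^{\pm1}])$ has positive rank over $\mathbb{C}[t^{\pm1}]$. Positive rank means its order vanishes, so $\Delta^\alpha_{M,\phi,1}=0$, with $\alpha$ viewed as the regular representation of $G$.

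The main obstacle is the separability step: the subgroup must be separable in a way that distinguishes $\pi_1 S$ from $\pi_1 M_S$ in a finite quotient of all of $\pi_1 M$, not merely of $\pi_1 M_S$. I would first handle it using virtual specialness of $\pi_1(M)$ (Wise, Agol, Przytycki--Wise for graph manifolds and mixed cases), which makes every geometrically finite or embedded-surface subgroup separable.

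A secondary care point concerns the bookkeeping when $S$ is disconnected, and when $M$ is $S^1\times D^2$ or $S^1\times S^2$. In those cases the direct dimension count must be done componentwise, with the index comparison applied to the appropriate component.
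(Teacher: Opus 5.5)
The paper gives no proof of this statement; it quotes it from Friedl--Vidussi. Your outline is essentially their argument and is correct in substance: non-fiberedness makes $\iota_{-*}\pi_1(S)$ a proper subgroup of $\pi_1(M_S)$, separability of embedded surface subgroups gives a finite quotient of all of $\pi_1(M)$ in which the images stay distinct, and the Mayer--Vietoris count $[G:\alpha(\pi_1 S)]>[G:\alpha(\pi_1 M_S)]$ forces positive rank, hence $\Delta^\alpha_{M,\phi,1}=0$. Two small repairs: cite Przytycki--Wise's theorem that embedded incompressible surface subgroups are separable in every compact 3-manifold group rather than deriving it from virtual specialness, which fails for some closed graph manifolds; and treat reducible $M\neq S^1\times S^2$ separately, since Stallings' product criterion needs irreducibility --- there a Mayer--Vietoris count across an essential sphere, using a finite quotient nontrivial on both sides, gives vanishing directly, while for $S^1\times D^2$ and $S^1\times S^2$ every nonzero class is fibered, so there is nothing to prove.
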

Here we are considering the left regular representation $\pi_1(M)\rightarrow
G\rightarrow \Aut_{\mathbb{Z}}(\mathbb{Z}[G])$. We identify this latter set with
$\GL(|G|,\mathbb{Z})$ and then construct the twisted homology group with
coefficients in $\mathbb{Z}^{|G|}[t^{\pm 1}]$, from which we obtain
$\Delta_{M,\phi,1}^\alpha$.

\subsection{Homotopical properties of ribbon homology cobordisms}

Following Gordon \cite{GordonRibbonConcordance}, one can show that if
we have a ribbon homology cobordism $W$ from $Y_-$ to $Y_+$, then the inclusion
$Y_+\hookrightarrow W$ induces a surjection $\iota_+\colon \pi_1(Y_+)\rightarrow
\pi_1(W)$, and the inclusion $Y_-\hookrightarrow W$ induces an injection
$\iota_-\colon \pi_1(Y_-)\rightarrow \pi_1(W)$. The argument is also spelled out
in \cite[Proposition 2.1]{DAEMI2022108580}.

Specializing to links, we can take this to define a topological version of
ribbon concordance: the links $L_1$ and $L_0$ are \textit{homotopy ribbon
concordant} if $L_1$ and $L_0$ are concordant by a locally flatly embedded
concordance $C$ such that $\iota_1\colon \pi_1(X_1)\rightarrow \pi_1((S^3\times
I)\setminus \mathcal{N}(C))$ is surjective and $\iota_0\colon
\pi_1(X_0)\rightarrow \pi_1((S^3\times I)\setminus \mathcal{N}(C))$ is
injective. The relation between twisted Alexander polynomials and homotopy
ribbon concordance is given by the following:

\begin{thm}[{\cite[Theorem 1.3]{FriedlHomotopyAlexander}}]
\label{thm:homotopy-ribbon-alexander}
    Let $Y$ be the exterior of a homotopy ribbon concordance from $K_1$ to $K_0$
    and $\alpha\colon \pi_1(Y)\rightarrow \GL(k,R)$ a representation for $R$ a
    Noetherian UFD. Then
    \[ 
        \Delta_{X_0,\phi\circ\iota_0,1}^{\alpha\circ\iota_0} \mid
        \Delta_{X_1,\phi\circ\iota_1,1}^{\alpha\circ\iota_1}
    \]
    where $\phi\in H^1(Y;\mathbb{Z})\cong \Hom(\pi_1(Y),\mathbb{Z})$ is a
    generator that takes a meridian to $1$.
\end{thm}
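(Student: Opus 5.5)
The plan is to compare all three spaces $X_0$, $Y$, $X_1$ through twisted homology. Write $A=R[t^{\pm1}]^k$ with the $\pi_1(Y)$-action $\alpha\otimes\phi$. Pulling back along $\iota_0$ and $\iota_1$ gives the twisted modules on $X_0$ and $X_1$. The target is the chain of relations
\[
\Delta^{\alpha\circ\iota_0}_{X_0,\phi\circ\iota_0,1}\ \Big|\ \operatorname{ord} H_1(Y;A)\ \Big|\ \Delta^{\alpha\circ\iota_1}_{X_1,\phi\circ\iota_1,1}.
\]
Throughout I use that $R[t^{\pm1}]$ is a Noetherian UFD, so orders are multiplicative in short exact sequences of finitely generated torsion modules. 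If a module is not torsion, its order is $0$, and any divisibility into it is automatic. So I may assume $H_1(X_1;A)$ is torsion wherever needed.

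\emph{Right-hand divisibility (surjective side).} Since $\iota_1$ is surjective, the preimage of $X_1$ in the universal cover $\tilde Y$ is connected. Hence the pair $(\tilde Y,\tilde X_1)$ has vanishing relative $H_0$ and $H_1$, and so $H_i(Y,X_1;A)=0$ for $i=0,1$ with any local coefficients. The long exact sequence of the pair then shows that $H_1(X_1;A)\to H_1(Y;A)$ is onto. If $H_1(X_1;A)$ is torsion, its quotient $H_1(Y;A)$ is also torsion, and its order divides $\Delta_{X_1}$.

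\emph{Left-hand divisibility (injective side).} Here I would use duality for the $4$-manifold $Y$ with $\partial Y=X_0\cup X_1\cup(\text{tori}\times I)$. This gives
\[
H_i(Y,X_0;A)\cong \overline{H^{4-i}(Y,X_1;A^\ast)},
\]
and the surjectivity of $\iota_1$ kills $H^0$ and $H^1$ of $(Y,X_1)$. Hence $H_3(Y,X_0;A)=H_4(Y,X_0;A)=0$. Since $Y$ is a homology cobordism, the twisted Euler characteristic of $(Y,X_0)$ over $Q(R)(t)$ vanishes. Together with the vanishing just obtained, this gives $\operatorname{rank} H_2(Y,X_0)=\operatorname{rank} H_1(Y,X_0)$, and I will argue both are torsion, by tensoring with $Q(R)(t)$ and using that $H_1(Y)$ is torsion. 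Then I analyze the exact sequence
\[
H_2(Y;A)\to H_2(Y,X_0;A)\xrightarrow{\partial} H_1(X_0;A)\to H_1(Y;A)\to H_1(Y,X_0;A)\to 0 .
\]
The goal is to show $\partial=0$, so that $H_1(X_0;A)$ embeds in $H_1(Y;A)$, which gives the divisibility. Equivalently, the goal is to show $\operatorname{ord}(\operatorname{im}\partial)$ is cancelled by $\operatorname{ord} H_1(Y,X_0;A)$ in the multiplicative identity of orders.

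\emph{Main obstacle.} The hard step is using the injectivity of $\iota_0$, which is purely group-theoretic, to control $\partial$. My first attempt follows Gordon's idea. Injectivity implies that the cover of $X_0$ induced from $\tilde Y$ is a disjoint union of copies of $\tilde X_0$, so $H_*(X_0;A)=H_*(\tilde X_0)\otimes_{\mathbb{Z}\pi_1(X_0)}A$ computes the same thing inside $\tilde Y$. I would then compare orders using that $\pi_1(X_0)$ and $\pi_1(Y)$ both have deficiency-type presentations; for $X_0$ this comes from a $3$-manifold with toroidal boundary, and for $Y$ it comes from the surjection from $\pi_1(X_1)$. Via Fox calculus, $\Delta_{X_0}$ is then a gcd of minors of a presentation matrix that extends to one for $\pi_1(Y)$. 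If that fails, I would fall back on showing that the twisted torsion of $(Y,X_0)$ equals $\operatorname{ord}H_1(Y,X_0)/\operatorname{ord}H_2(Y,X_0)$, and that the duality above forces $\operatorname{ord}H_2(Y,X_0)=\overline{\operatorname{ord}H^2(Y,X_1)}$ to divide $\operatorname{ord}H_1(Y,X_0)$. This would give the needed cancellation.
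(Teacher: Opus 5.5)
Your overall architecture is the right one. That includes the chain $\Delta_{X_0}\mid\operatorname{ord}H_1(Y;A)\mid\Delta_{X_1}$, the surjective side, and the duality $H_i(Y,X_0;A)\cong\overline{H^{4-i}(Y,X_1;A^\ast)}$. It also includes the Euler-characteristic count showing $H_1(Y,X_0;A)$ and $H_2(Y,X_0;A)$ are torsion once $H_1(X_1;A)$ is. But the decisive step, controlling $\partial$, is left unproved.

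\begin{itemize}
\item The Fox-calculus route is only a hope. For a homotopy ribbon concordance, injectivity of $\iota_0$ is purely group-theoretic. It supplies no presentation of $\pi_1(Y)$ extending one of $\pi_1(X_0)$ with controlled deficiency, and you give no argument that the relevant gcds of minors divide one another.
\item The fallback just asserts that duality ``forces'' $\operatorname{ord}H_2(Y,X_0;A)\mid\operatorname{ord}H_1(Y,X_0;A)$, with no reason given.
\item Your sequence ending in $\to 0$ presumes $H_0(X_0;A)\to H_0(Y;A)$ is injective, which you have not justified. So even that divisibility would not obviously give the cancellation you want.
\end{itemize}

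You are also leaning on the wrong hypothesis. As the paper remarks, the argument needs only the surjection $\pi_1(X_1)\to\pi_1(Y)$, together with duality and $\chi(Y,X_0)=0$. Injectivity of $\iota_0$ plays no role.

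The missing idea is that surjectivity of $\iota_1$ gives more than the vanishing of $H^0$ and $H^1$ of $(Y,X_1)$. The free $\mathbb{Z}[\pi_1(Y)]$-complex $C_*(\tilde Y,\tilde X_1)$ has $H_0=H_1=0$. Splitting off projectives therefore makes it chain equivalent to a complex $C'_*$ of projectives with $C'_0=C'_1=0$. This is also what actually justifies your ``any local coefficients'' claim. Let $\Lambda=R[t^{\pm1}]$ and let $D'=C'\otimes\Lambda^k$, twisted by the appropriate dual representation. Since $D'_1=0$, left exactness of $\Hom$ gives
\[
H^2(Y,X_1;A^\ast)\cong\ker\bigl(\Hom_\Lambda(D'_2,\Lambda)\to\Hom_\Lambda(D'_3,\Lambda)\bigr)\cong\Hom_\Lambda\bigl(H_2(D'),\Lambda\bigr),
\]
which is torsion-free because $\Lambda$ is a domain. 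By your duality isomorphism, $H_2(Y,X_0;A)$ is therefore torsion-free. By your Euler-characteristic argument (assuming $H_1(X_1;A)$ torsion, the only nontrivial case), it is torsion. Hence $H_2(Y,X_0;A)=0$ and $\partial=0$, so $H_1(X_0;A)\hookrightarrow H_1(Y;A)$. The left-hand divisibility then follows from multiplicativity of orders, and you never need to touch $H_0$ or any presentation of $\pi_1(Y)$.
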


Though the above result is stated in terms of homotopy ribbon concordances, the
proof is purely homological algebra and depends only on the fact that we have a
surjection $\pi_1(X_1)\rightarrow \pi_1(Y)$. The exact same argument lets us
extend to the following situation.

\begin{corollary}
\label{cor:divisibility}
    Let $W$ be a ribbon homology cobordism from $Y_-$ to $Y_+$. Suppose
    $\phi_-\in H^1(Y_-;\mathbb{Z})$ and $\phi_+\in H^1(Y_+;\mathbb{Z})$ are
    associated under $W$. If $\alpha\colon \pi_1(W)\rightarrow \GL(k,R)$ is a
    representation with $R$ a Noetherian UFD, then
    \[ 
        \Delta_{Y_-,\phi_-,1}^{\alpha\circ\iota_-} \mid
        \Delta_{Y_+,\phi_+,1}^{\alpha\circ\iota_+}.
    \]
\end{corollary}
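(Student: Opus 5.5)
We need to prove Corollary \ref{cor:divisibility}. Following the proof of Theorem \ref{thm:homotopy-ribbon-alexander}, the plan is to use the surjection $\iota_+\colon \pi_1(Y_+)\to\pi_1(W)$ to show that the twisted Alexander module of $Y_+$ surjects onto that of $W$. We then use the ribbon and homology hypotheses to show that the module of $Y_-$ sits inside that of $W$ up to torsion of the right order.

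Let $\phi\in H^1(W;\mathbb{Z})$ be the class restricting to $\phi_\pm$; it exists after scaling, since the identifications come from $W$. Set $\Lambda=R[t^{\pm1}]$ and write $H_1(\,\cdot\,;\Lambda^k)$ for homology twisted by $\alpha\otimes\phi$, pulled back along $\iota_\pm$.

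\textbf{Step 1.} For a connected CW complex, $H_1(X;\Lambda^k)$ depends only on a presentation of $\pi_1(X)$. Concretely, it is the homology at the middle term of the Fox calculus complex
\[
\Lambda^{k\cdot\#\text{rel}}\to\Lambda^{k\cdot\#\text{gen}}\to\Lambda^k .
\]
A surjection $\pi_1(Y_+)\to\pi_1(W)$ lets us present $\pi_1(W)$ with the same generators as $\pi_1(Y_+)$ and extra relations. The relation matrix for $W$ therefore contains that of $Y_+$ as a block, which gives a surjection $H_1(Y_+;\Lambda^k)\twoheadrightarrow H_1(W;\Lambda^k)$. Hence $\operatorname{ord}H_1(W)\mid\operatorname{ord}H_1(Y_+)$; this is trivially true if the latter is $0$. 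A little care is needed in the comparison with $H_0$, and for that I would follow Friedl's bookkeeping via $\operatorname{ord}$ of $H_1$ relative to a basepoint. This is exactly the homological algebra the paper says carries over verbatim.

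\textbf{Step 2.} Next we need $\Delta_{Y_-}\mid \Delta_W$. View $W$ upside down: it is built from $Y_+$ by 2- and 3-handles, or equivalently from $Y_-$ by 1- and 2-handles. Dually, $W$ is obtained from $Y_-\times I$ by attaching $n$ 1-handles and $m$ 2-handles. The homology-cobordism hypothesis over $\mathbb{Q}$ gives $H_*(W,Y_-;\mathbb{Q})=0$, so $m=n$, and the cellular chain complex $C_*(W,Y_-;\mathbb{Q})$ is acyclic. Lifting to twisted coefficients, $C_*(W,Y_-;\Lambda^k)$ is
\[
0\to\Lambda^{kn}\xrightarrow{A}\Lambda^{kn}\to 0 .
\]
This square matrix $A$ has $\det A\neq0$, because at $t=1$ with $\alpha$ trivialized it recovers an invertible rational matrix. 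For a general $\alpha$ I expect this to need an argument. Since I am only after divisibility, and the $\det A=0$ case makes the claim automatic whenever $\Delta_{Y_+}=0$, I would instead use Step 1 together with the long exact sequence.

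\textbf{Step 3.} The long exact sequence of the pair reads
\[
H_2(W,Y_-)\to H_1(Y_-)\to H_1(W)\to H_1(W,Y_-)\to\cdots
\]
Here $H_2(W,Y_-)=\ker A$, and this is $0$ when $\det A\ne 0$. So $H_1(Y_-)\hookrightarrow H_1(W)$. Orders are multiplicative in short exact sequences over a Noetherian UFD when all modules are torsion. If $H_1(W)$ is not torsion, its order is $0$, and then $\Delta_{Y_+}=0$ and the divisibility holds trivially. Otherwise $\operatorname{ord}H_1(Y_-)\mid\operatorname{ord}H_1(W)$, and combining with Step 1 gives the claim.

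The main obstacle is showing that $\ker A=0$, that is $\det A\neq 0$, for an arbitrary representation $\alpha$. This is where Gordon's injectivity of $\iota_-$ was used in the concordance setting. To handle it I would argue as follows. If $\det A=0$, then $H_2(W,Y_-;\Lambda^k)$ has positive rank. An Euler characteristic count over $Q(\Lambda)$ then forces $H_1(W,Y_-;\Lambda^k)$ to have positive rank as well. Via the surjection of Step 1, this forces $\operatorname{rank}H_1(Y_+;\Lambda^k)>0$, so $\Delta_{Y_+}=0$ and the statement again holds trivially.
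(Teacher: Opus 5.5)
Your proof is correct once a few small points are fixed. For the $Y_-$ half it takes a different route from the paper.

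The paper gives no independent argument. It says the proof of Theorem~\ref{thm:homotopy-ribbon-alexander} (from \cite{FriedlHomotopyAlexander}) goes through verbatim, because that proof uses only the surjection $\iota_+$ on $\pi_1$, not the injectivity of $\iota_-$ you allude to. Your Step~1 is exactly that input.

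For the other half, you use the relative handle decomposition of $(W,Y_-)$, so that $C_*(W,Y_-;\Lambda^k)$ is a single square matrix $A$. Then $H_2(W,Y_-;\Lambda^k)=\ker A$ is automatically torsion-free, and a dichotomy finishes the proof with no duality:
\begin{itemize}
\item if $\det A\neq0$, you get $H_1(Y_-;\Lambda^k)\hookrightarrow H_1(W;\Lambda^k)$, and also $\operatorname{ord}H_1(W,Y_-;\Lambda^k)=\det A$;
\item if $\det A=0$, you get $\Delta_{Y_+}=0$.
\end{itemize}
This is elementary and self-contained in the smooth ribbon setting. The paper's route needs only $\pi_1$-surjectivity of $\iota_+$ plus the homology-cobordism condition. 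So it also covers topological homotopy ribbon concordances, where there is no handle structure on $(W,Y_-)$ and the control of $H_2(W,Y_-;\Lambda^k)$ must come from the $Y_+$ side, e.g.\ via duality.

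\textbf{The heuristic for $\det A\neq0$ is false.} Setting $t=1$ does not trivialize $\alpha$, and $\det A$ really can vanish. Take
\begin{itemize}
\item $Y_-=T^3$, with $\phi$ dual to one circle factor and $g$ a second circle factor;
\item a single 1-handle $x$ and a 2-handle along $xgx^{-1}g^{-1}x$.
\end{itemize}
For a character $\alpha\colon\pi_1(W)\to\mathbb{C}^*$ one has $\alpha(x)=1$ and $A=2-\alpha(g)^{\pm1}$, the exponent depending on conventions. This vanishes for $\alpha(g)=2$ or $1/2$. So your case split is essential, not a convenience.

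\textbf{A skipped step in the case $\det A=0$.} Going from $\operatorname{rank}H_1(W,Y_-;\Lambda^k)>0$ to $\operatorname{rank}H_1(Y_+;\Lambda^k)>0$ is not just ``via the surjection of Step~1.'' You first need $\operatorname{rank}H_1(W;\Lambda^k)>0$. This follows from exactness of $H_1(W)\to H_1(W,Y_-)\to H_0(Y_-)$, since $H_0(Y_-;\Lambda^k)$ is torsion: it is annihilated by $\det(\alpha(g)t^{\phi_-(g)}-I)\neq0$ for any $g$ with $\phi_-(g)\neq0$. Only then does Step~1 apply.

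\textbf{No scaling of $\phi$ is needed.} Since $H_1(Y_+;\mathbb{Z})\to H_1(W;\mathbb{Z})$ is onto, the class on $W$ restricting to the integral class $\phi_+$ is itself integral.
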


\section{Proof of main theorem}

Our identifications of (co)homology are over $\mathbb{Q}$, but we can always
take multiple and to ensure that the two classes we are comparing in $Y_-$ and
$Y_+$ are both integral. Since our comparisons are invariant under scaling, we
will always assume we are working with integral classes. We shall prove the
contrapositive.

We first restate an argument of \cite{GordonRibbonConcordance}
that lets us extend a representation of $\pi_1(Y_-)$ to $\pi_1(W)$, from which
we also get a representation of $\pi_1(Y_+)$.

\begin{lemma}
\label{lem:extend}
    Let $\beta\colon \pi_1(Y_-)\rightarrow G$ be a representation to a compact
    connected Lie group $G$. Then $\beta$ can be extended to a representation
    $\alpha\colon \pi_1(W) \rightarrow G$ that restricts to $\beta$.
\end{lemma}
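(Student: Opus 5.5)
This is Gordon's argument. Turn $W$ upside down, so that it is built from $Y_-\times I$ by attaching $1$-handles and then $2$-handles. Since every handle is attached to the interior, van Kampen gives a presentation
\[
\pi_1(W)\cong \bigl(\pi_1(Y_-)*F(x_1,\dots,x_n)\bigr)\big/\langle\!\langle r_1,\dots,r_m\rangle\!\rangle .
\]
Here $F(x_1,\dots,x_n)$ is free on the generators coming from the $1$-handles, and each $r_j$ is the word in $\pi_1(Y_-)*F$ read off from the attaching circle of the $j$th $2$-handle. Because $W$ is a $\mathbb{Q}$-homology cobordism, $\chi(W,Y_-)=0$, and this forces $m=n$. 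Moreover, $H_1(W,Y_-;\mathbb{Q})=H_2(W,Y_-;\mathbb{Q})=0$, so the cellular boundary map $\mathbb{Q}^m\to\mathbb{Q}^n$ is invertible. This map is the matrix $E=(e_{ij})$, where $e_{ij}$ is the exponent sum of $x_i$ in $r_j$. Hence $\det E\neq 0$.

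To extend $\beta$, I keep $\beta$ on $\pi_1(Y_-)$ and look for $g=(g_1,\dots,g_n)\in G^n$ with $x_i\mapsto g_i$ satisfying every relation. Define
\[
f\colon G^n\to G^n,\qquad f(g)=\bigl(r_1(\beta,g),\dots,r_n(\beta,g)\bigr),
\]
where each element of $\pi_1(Y_-)$ appearing in $r_j$ is replaced by its $\beta$-image and each $x_i$ by $g_i$. It suffices to show that $(1,\dots,1)$ lies in the image of $f$. Any preimage then defines a homomorphism $\alpha$ on the free product that kills all the $r_j$, so $\alpha$ descends to $\pi_1(W)$ and restricts to $\beta$ by construction.

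The main step is this surjectivity, and I would prove it by a degree argument on the closed orientable manifold $G^n$, where $G$ is compact, connected, of positive dimension, and orientable since it is a Lie group. First, $f$ is homotopic to a map of the same form in which the letters from $\pi_1(Y_-)$ are replaced by the identity. This works because $G$ is connected: choose paths from each $\beta(h)$ to $1$ and apply them simultaneously. So it suffices to compute the degree of
\[
f_0(g)=\bigl(w_1(g),\dots,w_n(g)\bigr),
\]
where $w_j$ is $r_j$ with the $\pi_1(Y_-)$-letters deleted, a word in the $x_i$ alone.

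To compute $\deg f_0$, I would use the fact that in the rational cohomology of a connected Lie group, the word map $G^n\to G$ acts on primitive generators according to the exponent sums. This holds because the multiplication $\mu$ pulls back primitives to $a\otimes1+1\otimes a$ and inversion sends $a$ to $-a$. So $f_0^*$ acts on the primitive generators of degree $d$ in $H^*(G^n;\mathbb{Q})$ through the matrix $E$, or its transpose. Writing $\ell$ for the rank of $G$, the top class of $G^n$ is the product of all primitive generators, so
\[
\deg f_0=(\det E)^{\ell}\neq 0 .
\]
When $G$ is trivial there is nothing to prove. Since a map of nonzero degree between closed oriented manifolds is surjective, the element $(1,\dots,1)$ is in the image of $f_0$, and also in the image of $f$ because the degree is a homotopy invariant.

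The step I expect to be most delicate is the degree computation, specifically the bookkeeping of the Hopf-algebra primitives for words rather than single products. If that bookkeeping becomes awkward, I would instead cite Gordon's version of the argument, which he carries out for $G=SO(3)$ or $SU(2)$ and which generalizes directly.
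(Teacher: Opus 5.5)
Your proposal is correct and follows the paper's proof: the same van Kampen presentation of $\pi_1(W)$, the same observation that $H_*(W,Y_-;\mathbb{Q})=0$ makes the exponent-sum matrix $E$ square and nonsingular, and then the Gerstenhaber--Rothaus theorem. The only difference is that the paper cites Gerstenhaber--Rothaus, while you reprove it by the degree computation $\deg f_0=(\det E)^{\ell}$. (The phrase ``turn $W$ upside down'' is unnecessary and slightly misleading, since by definition $W$ is already built from $Y_-$ using only 1- and 2-handles, but it does not affect the argument.)
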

\begin{proof}
    In a ribbon homology cobordism we have $W=Y_-\times I\cup
    \text{1-handles}\cup\text{2-handles}$. This means we can write
    \[ 
        \pi_1(W)\cong \frac{\pi_1(Y_-)*F}{\langle\!\langle r_1,\ldots,
        r_n\rangle\!\rangle}.
    \]
    Since $H_*(Y_-;\mathbb{Q})\rightarrow H_*(W;\mathbb{Q})$ is
    an isomorphism, there are the same number of $1$-handles (generators of $F$)
    and $2$-handles (relations), with the $2$-handles cancelling out the
    $1$-handles homologically. This implies that the $r_i$ satisfy the
    conditions of \cite[Theorem 1(ii)]{GerstenhaberRothaus}, from which we
    immediately obtain $\alpha$.
\end{proof}

\begin{proof}[Proof of Theorem~\ref{thm:fibered}]
    We begin with the case where $Y_+$ is $S^1\times D^2$ or $S^1\times S^2$.
    From the facts that $\pi_1(Y_+)$ surjects onto $\pi_1(W)$, $\pi_1(Y_-)$
    injects into $\pi_1(W)$, and that the rational first homologies of $Y_+$,
    $Y_1$, and $W$ are identified (all $\mathbb{Q}$ in this case), we must have
    $\pi_1(Y_-)\cong\mathbb{Z}$. By the Poincar\'e conjecture, $Y_-$ is either
    one of $S^1\times D^2$ or $S^1\times S^2$. But $S^1\times D^2$ has torus
    boundary while $S^1\times S^2$ is closed, so in either case it must be that
    $Y_-$ is identified with $Y_+$. Since the Thurston norm ball is
    1-dimensional in both cases, we are done.

    Let $\phi_-\in H^1(Y_-;\mathbb{Z})$ be a nonfibered class, and
    $\phi_+\in H^1(Y_+;\mathbb{Z})$ the associated class under $W$. By
    Theorem~\ref{thm:nonfibered-vanishing}, there is a surjection $\beta\colon
    \pi_1(Y_-)\rightarrow G$ to some finite group $G$ such that
    $\Delta_{Y_-,\phi_-,1}^\beta=0$. We can further compose with the map
    $G\rightarrow U(|G|)$ induced by the left regular representation to obtain a
    unitary representation we still call $\beta$. Then by construction $\beta$
    is really just $\alpha$ with $\mathbb{C}$ coefficients, so we get
    $\Delta_{Y_-,\phi_-,1}^\beta=0$.

    From Lemma~\ref{lem:extend}, we can extend the representation $\beta$ on
    $\pi_1(Y_-)$ along $\iota_-$ to a representation $\alpha$ on $\pi_1(W)$ with
    $\beta=\alpha\circ\iota_-$. As $\phi_-$ extends to $\phi$ along $\iota_-$ as
    well, we obtain a representation $\alpha\otimes\phi\colon
    \pi_1(W)\rightarrow U(|G|)$ that restricts to $\beta\otimes \phi_-$ on
    $\pi_1(Y_-)$. An application of Corollary~\ref{cor:divisibility} 
    immediately yields $\Delta_{Y_+,\phi_+,1}^{\alpha\circ\iota_+}=0$. By
    Theorem~\ref{thm:obstruct-fiber} the class $\phi_+$ must be nonfibered as
    the zero polynomial is not monic.
\end{proof}

\begin{remark}
    It would be interesting to see if we could prove that the Thurston norm was
    monotonic under ribbon concordance using twisted Alexander polynomials, as
    seen in \cite{DAEMI2022108580}. The corresponding result for knot genus was
    first proven in \cite{Zemke}. The primary issue with using these methods
    described here is that though there is a representation on $\pi_1(Y_-)$ that
    detects the Thurston norm as seen in
    \cite{friedl2012thurstonnormtwistedalexander}, what we end up requiring is
    $\deg(\Delta_{Y_-,1})\leq \deg(\Delta_{Y_+,1})$. Though we have divisibility
    from Corollary~\ref{cor:divisibility}, there is no guarantee that the right
    hand side is nonzero. This would require an analysis of the representation
    variety and whether or not the component we land on with respect to $Y_+$
    has a representation with nonzero first Alexander polynomial. 
\end{remark}

\section*{Acknowledgements}

I would like to thank Ian Agol for many helpful discussions and feedback. I'd
like to thank Qiuyu Ren for comments and a strengthening of
Theorem~\ref{thm:fibered}. I would also like to thank Jianru Duan for pointing
out an error in the first draft. I was partially supported by the Simons
Investigator grant \#376200. I'm also grateful to SL Math for providing
facilities to discuss and think about this work.

\bibliography{bibliography}{}

\newcommand{\etalchar}[1]{$^{#1}$}
\providecommand{\bysame}{\leavevmode\hbox to3em{\hrulefill}\thinspace}
\providecommand{\MR}{\relax\ifhmode\unskip\space\fi MR }
\providecommand{\MRhref}[2]{%
  \href{http://www.ams.org/mathscinet-getitem?mr=#1}{#2}
}
\providecommand{\href}[2]{#2}
\begin{thebibliography}{DLVVW22}

\bibitem[DLVVW22]{DAEMI2022108580}
Aliakbar Daemi, Tye Lidman, David~Shea Vela-Vick, and C.-M.~Michael Wong, \emph{Ribbon homology cobordisms}, Advances in Mathematics \textbf{408} (2022), 108580.

\bibitem[FK06]{FRIEDL2006929}
Stefan Friedl and Taehee Kim, \emph{{The Thurston norm, fibered manifolds and twisted Alexander polynomials}}, Topology \textbf{45} (2006), no.~6, 929--953.

\bibitem[FKL{\etalchar{+}}22]{FriedlHomotopyAlexander}
Stefan Friedl, Takahiro Kitayama, Lukas Lewark, Matthias Nagel, and Mark Powell, \emph{Homotopy ribbon concordance, {B}lanchfield pairings, and twisted {A}lexander polynomials}, Canad. J. Math. \textbf{74} (2022), no.~4, 1137--1176. \MR{4464583}

\bibitem[FV10]{friedl2010surveytwistedalexanderpolynomials}
Stefan Friedl and Stefano Vidussi, \emph{{A survey of twisted Alexander polynomials}}, 2010.

\bibitem[FV11]{FriedlVidussi2011}
Stefan Friedl and Stefano Vidussi, \emph{Twisted {A}lexander polynomials detect fibered 3-manifolds}, Annals of Mathematics \textbf{173} (2011), no.~3, 1587--1643. \MR{2800721}

\bibitem[FV13]{FriedlVanishing}
\bysame, \emph{A vanishing theorem for twisted alexander polynomials with applications to symplectic 4-manifolds}, Journal of the European Mathematical Society \textbf{15} (2013), 2027--2041.

\bibitem[FV15]{friedl2012thurstonnormtwistedalexander}
Stefan Friedl and Stefano Vidussi, \emph{{The Thurston norm and twisted Alexander polynomials}}, Journal für die reine und angewandte Mathematik (Crelles Journal) \textbf{2015} (2015), no.~707, 87--102.

\bibitem[Gor81]{GordonRibbonConcordance}
C.~McA. Gordon, \emph{Ribbon concordance of knots in the {$3$}-sphere}, Math. Ann. \textbf{257} (1981), no.~2, 157--170. \MR{634459}

\bibitem[GR62]{GerstenhaberRothaus}
Murray Gerstenhaber and Oscar~S. Rothaus, \emph{The solution of sets of equations in groups}, Proceedings of the National Academy of Sciences of the United States of America \textbf{48} (1962), no.~9, 1531--1533.

\bibitem[Koc06]{Kochloukova}
Dessislava Kochloukova, \emph{Some novikov rings that are von neumann finite and knot-like groups}, Commentarii Mathematici Helvetici \textbf{81} (2006), 931--943.

\bibitem[Sil92]{SILVER199299}
D.S. Silver, \emph{On knot-like groups and ribbon concordance}, Journal of Pure and Applied Algebra \textbf{82} (1992), no.~1, 99--105.

\bibitem[Tur01]{MR1809561}
Vladimir Turaev, \emph{Introduction to combinatorial torsions}, Lectures in Mathematics ETH Z\"urich, Birkh\"auser Verlag, Basel, 2001, Notes taken by Felix Schlenk. \MR{1809561}

\bibitem[Zem19]{Zemke}
Ian Zemke, \emph{Knot {F}loer homology obstructs ribbon concordance}, Ann. of Math. (2) \textbf{190} (2019), no.~3, 931--947. \MR{4024565}

\end{thebibliography}
\bibliographystyle{amsalpha}

\end{document}